\def\MT@register@subst@font{\MT@exp@one@n\MT@in@clist\font@name\MT@font@list
   \ifMT@inlist@\else\xdef\MT@font@list{\MT@font@list\font@name,}\fi}
\theoremstyle{plain}
\newtheorem{theorem}{Theorem}
\newtheorem{lemma}[theorem]{Lemma}
\newtheorem{proposition}[theorem]{Proposition}
\newtheorem{corollary}[theorem]{Corollary}
\newcommand{\setbuilder}[2]{\left\{#1\;\middle|\;#2\right\}}
\newcommand{\set}[1]{\left\{#1\right\}}
\newcommand{\epsi}{\varepsilon}
\newcommand{\norm}[1]{\left\lVert#1\right\rVert}
\newcommand{\abs}[1]{\left\lvert#1\right\rvert}
\newcommand{\numbersystem}[1]{\mathbb{#1}}
\newcommand{\bN}{\numbersystem{N}}
\newcommand{\bR}{\numbersystem{R}}
\newcommand{\vect}[1]{\bm{#1}}
\newcommand{\va}{\vect{a}}
\newcommand{\vb}{\vect{b}}
\newcommand{\vo}{\vect{o}}
\newcommand{\vx}{\vect{x}}
\newcommand{\vy}{\vect{y}}
\newcommand{\dimensional}{\nobreakdash-\hspace{0pt}dimensional\xspace}
\title{Equilateral sets and a Sch\"utte Theorem for the $4$-norm}
\author{Konrad J.\ Swanepoel}
\address{Department of Mathematics, London School of Economics and Political Science, Houghton Street, London WC2A 2AE, United Kingdom.}
\email{k.swanepoel@lse.ac.uk}
\subjclass[2010]{Primary 46B20; Secondary 52A21, 52C17.}
\begin{document}
\begin{abstract}
A well-known theorem of Sch\"utte (1963) gives a sharp lower bound for the ratio of the maximum and minimum distances between $n+2$ points in $n$\dimensional Euclidean space.
In this note we adapt B\'ar\'any's elegant proof (1994) of this theorem to the space $\ell_4^n$.
This gives a new proof that the largest cardinality of an equilateral set in $\ell_4^n$ is $n+1$, and gives a constructive bound for an interval $(4-\epsi_n,4+\epsi_n)$ of values of $p$ close to $4$ for which it is known that the largest cardinality of an equilateral set in $\ell_p^n$ is $n+1$.
\end{abstract}

\maketitle

\section{Introduction}
A subset $S$ of a normed space $X$ with norm $\norm{\cdot}$ is called \emph{equilateral} if for some $\lambda>0$, $\norm{\vx-\vy}=\lambda$ for all distinct $\vx,\vy\in S$.
Denote the largest cardinality of an equilateral set in a finite\dimensional normed space $X$ by $e(X)$.

For $p\geq 1$ define the $p$-norm of a vector $\vx=(x_1,\dots,x_n)\in\bR^n$ as
\[\norm{\vx}_p=\norm{(x_1,\dots,x_n)}_p=\biggl(\sum_{i=1}^n \abs{x_i}^p\biggr)^{1/p}\text{.}\]
When dealing with a sequence $\vx_1,\dots,\vx_m\in\bR^n$ of vectors, we denote the coordinates of $\vx_i$ as $(x_{i,1},\dots,x_{i,n})$.
Denote the normed space $\bR^n$ with norm $\norm{\cdot}_p$ by $\ell_p^n$.
It is not difficult to find examples of equilateral sets showing that $e(\ell_p^n)\geq n+1$.
It is a simple exercise in linear algebra to show that $e(\ell_2^n)\leq n+1$.
A problem of Kusner \cite{Guy} asks if the same is true for $\ell_p^n$, where $p>1$.
For the current best upper bounds on $e(\ell_p^n)$, see \cite{Alon-Pudlak}.
We next mention only the results that decide various cases of Kusner's question.
A compactness argument gives for each $n\in\bN$ the existence of $\epsi_n>0$ such that $p\in(2-\epsi_n,2+\epsi_n)$ implies $e(\ell_p^n)=n+1$.
However, this argument gives no information on $\epsi_n$.
As observed by Cliff Smyth \cite{Smyth}, the following theorem of Sch\"utte \cite{Schutte} can be used to give an explicit lower bound to $\epsi_n$ in terms of $n$:
\begin{theorem}[Sch\"utte \cite{Schutte}]
Let $S$ be a set of at least $n+2$ points in $\ell_2^n$.
Then
\[ \frac{\displaystyle\max_{\vx,\vy\in S}\norm{\vx-\vy}_2}{\displaystyle\min_{\vx,\vy\in S, \vx\neq\vy}\norm{\vx-\vy}_2} \geq
\begin{cases}
\displaystyle\biggl(1+\frac{2}{n}\biggr)^{1/2} & \text{if $n$ is even,}\\
\displaystyle\biggl(1+\frac{2}{n-(n+2)^{-1}}\biggr)^{1/2} & \text{if $n$ is odd.}
\end{cases}
\]
\end{theorem}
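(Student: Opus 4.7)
The plan is to adapt B\'ar\'any's proof, which combines a linear-algebraic pigeonhole with the polarization identity. Scale so that the minimum distance is at least $1$ and let $D$ denote the diameter of $S = \{\vx_1,\dots,\vx_{n+2}\}$. Since there are more than $n+1$ points in an $n$\dimensional affine space, there exist scalars $\lambda_1,\dots,\lambda_{n+2}$, not all zero, with $\sum_i \lambda_i = 0$ and $\sum_i \lambda_i \vx_i = \vo$. Expanding $0 = \bigl\lVert \sum_i \lambda_i\vx_i\bigr\rVert_2^{\,2} = \sum_{i,j}\lambda_i\lambda_j\langle\vx_i,\vx_j\rangle$ and using $\langle\vx_i,\vx_j\rangle = \tfrac12(\lVert\vx_i\rVert_2^2+\lVert\vx_j\rVert_2^2-\lVert\vx_i-\vx_j\rVert_2^2)$ together with $\sum_i\lambda_i=0$, I would obtain the key identity
\[
\sum_{i,j}\lambda_i\lambda_j\norm{\vx_i-\vx_j}_2^2 = 0.
\]

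Now I would partition the index set by sign: let $I^+=\{i:\lambda_i>0\}$ and $I^-=\{i:\lambda_i<0\}$, with $\abs{I^+}=k$, $\abs{I^-}=\ell$, and $\Lambda = \sum_{i\in I^+}\lambda_i = -\sum_{j\in I^-}\lambda_j$. Splitting the identity according to the sign of $\lambda_i\lambda_j$, and using $\norm{\vx_i-\vx_j}_2\leq D$ on the same-sign terms (where $\lambda_i\lambda_j>0$) and $\norm{\vx_i-\vx_j}_2\geq 1$ on the opposite-sign terms (where $-\lambda_i\lambda_j>0$), I would obtain
\[
D^2\bigl(2\Lambda^2-\textstyle\sum_i\lambda_i^2\bigr) \;\geq\; 2\Lambda^2,
\]
since $\sum_{i,j\text{ same sign}}\lambda_i\lambda_j = 2\Lambda^2$ and $\sum_{i,j\text{ opposite sign}}(-\lambda_i\lambda_j)=2\Lambda^2$, with the diagonal $i=j$ contributing $\sum_i\lambda_i^2$ to the same-sign sum that must be subtracted off since those distances are zero. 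Rearranging gives $D^2\geq 1/\bigl(1-\tfrac{1}{2\Lambda^2}\sum_i\lambda_i^2\bigr)$.

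The final step is to lower-bound $\sum_i\lambda_i^2/\Lambda^2$ purely in terms of $n$. Cauchy\nobreakdash-Schwarz applied separately on $I^+$ and $I^-$ yields $\sum_i\lambda_i^2 \geq \Lambda^2\bigl(\tfrac{1}{k}+\tfrac{1}{\ell}\bigr)$, and since $k+\ell\leq n+2$, the quantity $\tfrac{1}{k}+\tfrac{1}{\ell}=\tfrac{k+\ell}{k\ell}$ is minimised when $k\ell$ is as large as possible. For even $n$ the optimum is $k=\ell=(n+2)/2$, giving $\tfrac{1}{k}+\tfrac{1}{\ell}\geq \tfrac{4}{n+2}$ and hence $D^2\geq (n+2)/n = 1+2/n$. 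For odd $n$, integrality forces $\{k,\ell\}=\{(n+1)/2,(n+3)/2\}$, giving $\tfrac{1}{k}+\tfrac{1}{\ell}\geq \tfrac{4(n+2)}{(n+1)(n+3)}$; substituting and simplifying yields $D^2\geq (n+1)(n+3)/(n^2+2n-1) = 1+2/(n-(n+2)^{-1})$, matching the statement.

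The main obstacle is not any single computation but rather the initial leap: recognising that the right tool is an affine dependence among the $n+2$ points combined with polarization, so that the inner-product structure of $\ell_2^n$ converts the constraint $\sum\lambda_i\vx_i=\vo$ into a pure statement about pairwise squared distances. Once that identity is in hand, the remaining work is a two-sided estimate followed by an elementary Cauchy\nobreakdash-Schwarz optimisation depending on the parity of~$n$.
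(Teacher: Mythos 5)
Your proof is correct and is essentially B\'ar\'any's argument, which is exactly the method the paper adapts to the $4$-norm in its proof of Theorem~\ref{maintheorem}: your sign partition of the affine dependence is the Radon partition, and your polarization identity is the $p=2$ specialization of the paper's expansion of $(a_{i,m}-b_{j,m})^4$ under the averaging operators. The paper itself cites Sch\"utte for this statement rather than proving it, but your route coincides with the underlying method, and all your estimates (including the observation that $2\Lambda^2-\sum_i\lambda_i^2>0$ is forced by the inequality itself, and the parity-dependent optimisation of $1/k+1/\ell$) check out.
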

The lower bounds in this theorem are sharp.
\begin{corollary}[Smyth \cite{Smyth}]\label{smythcor}
If $\abs{p-2} < \frac{2\log(1+2/n)}{\log(n+2)}=\frac{4(1+o(1))}{n\log n}$ then the largest cardinality of an equilateral set in $\ell_p^n$ is $e(\ell_p^n)=n+1$.
\end{corollary}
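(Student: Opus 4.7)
My plan is to prove the contrapositive: assume there is an equilateral set $S\subseteq\ell_p^n$ with common distance $\lambda$ and $\abs{S}\geq n+2$, and derive a lower bound on $\abs{p-2}$. The idea is to view $S$ as a point set in $\ell_2^n$, apply Sch\"utte's theorem there, and use the standard comparison between $\norm{\cdot}_p$ and $\norm{\cdot}_2$ to translate ``all $\ell_p$-distances equal'' into ``all $\ell_2$-distances almost equal''.

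First, I invoke the two-sided inequality
\[
\norm{\vx}_q \leq \norm{\vx}_r \leq n^{1/r-1/q}\norm{\vx}_q \qquad (1\leq r\leq q\leq\infty)
\]
with $\{r,q\}=\{2,p\}$, applied to the differences $\vx-\vy$ of points of $S$. Using $\norm{\vx-\vy}_p=\lambda$, in both cases $p\geq 2$ and $p\leq 2$ this gives
\[
\frac{\displaystyle\max_{\vx,\vy\in S}\norm{\vx-\vy}_2}{\displaystyle\min_{\vx\neq\vy\in S}\norm{\vx-\vy}_2}\leq n^{\abs{1/p-1/2}}.
\]
Since $\abs{S}\geq n+2$, Sch\"utte's theorem applied in $\ell_2^n$ gives the lower bound $(1+2/n)^{1/2}$ for the same ratio (the odd-$n$ bound is strictly stronger, so the even-$n$ bound suffices uniformly). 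Taking logarithms and substituting $1/p-1/2=(2-p)/(2p)$, the combination becomes
\[
\abs{p-2}\log n \geq p\log(1+2/n).
\]

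It remains to show that this contradicts the hypothesis $\abs{p-2}<2\log(1+2/n)/\log(n+2)$. For $p\geq 2$ one has $p\log(1+2/n)\geq 2\log(1+2/n)$, so $\abs{p-2}\geq 2\log(1+2/n)/\log n\geq 2\log(1+2/n)/\log(n+2)$, an immediate contradiction. For $p\leq 2$, the identity $\log(n+2)=\log n+\log(1+2/n)$ turns the derived inequality $(2-p)\log n\geq p\log(1+2/n)$ into $p\leq 2\log n/\log(n+2)$, i.e.\ $2-p\geq 2\log(1+2/n)/\log(n+2)$, again a contradiction. Hence no equilateral set of size $\geq n+2$ can exist in $\ell_p^n$, and combined with the elementary bound $e(\ell_p^n)\geq n+1$ mentioned in the introduction, this yields $e(\ell_p^n)=n+1$. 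The asymptotic form follows at once from $\log(1+2/n)\sim 2/n$ and $\log(n+2)\sim\log n$. The only ``main obstacle'' is this algebraic matching, but it is not really an obstacle: the $p\leq 2$ case is tight because the norm comparison matches Sch\"utte's constant exactly, while the $p\geq 2$ case has room to spare.
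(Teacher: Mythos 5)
Your proof is correct and follows essentially the same route the paper takes for its analogous Corollary~\ref{cor} (the $p=4$ case): compare $\norm{\cdot}_p$ with $\norm{\cdot}_2$ to bound the ratio of $\ell_2$-distances of a putative equilateral set by $n^{\abs{1/p-1/2}}$, invoke Sch\"utte's theorem, and solve for $\abs{p-2}$. You have in fact written out, correctly, the final ``calculation'' (the case split on $p\gtrless 2$ using $\log(n+2)=\log n+\log(1+2/n)$) that the paper leaves implicit.
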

The dependence of $\epsi_n=\frac{4(1+o(1))}{n\log n}$ on $n$ is necessary, since $e(\ell_p^n)>n+1$ if $1\leq p<2-\frac{1+o(1)}{(\ln 2) n}$ \cite{Swanepoel-AdM}.
(These are the only known cases where the answer to Kusner's question is negative.)

There is also a linear algebra proof that $e(\ell_4^n)=n+1$ \cite{Swanepoel-AdM}.
As in the case of $p=2$, compactness gives an ineffective $\epsi_n>0$ such that if $p\in(4-\epsi_n,4+\epsi_n)$, then $e(\ell_p^n)=n+1$.
The question arises whether Sch\"utte's theorem can be adapted to $\ell_4^n$, so that a conclusion similar to Corollary~\ref{smythcor} can be made for $p$ close to $4$.
Proofs of Sch\"utte's theorem have been given by Sch\"utte \cite{Schutte}, Schoenberg \cite{Schoenberg}, Seidel \cite{Seidel} and B\'ar\'any \cite{Barany}.
It is the purpose of this note to show that B\'ar\'any's simple and elegant proof of Sch\"utte's theorem can indeed be adapted.

\begin{theorem}\label{maintheorem}
Let $S$ be a set of at least $n+2$ points in $\ell_4^n$.
Then
\[ \frac{\displaystyle\max_{\vx,\vy\in S}\norm{\vx-\vy}_4}{\displaystyle\min_{\vx,\vy\in S, \vx\neq\vy}\norm{\vx-\vy}_4} \geq
\begin{cases}
\displaystyle\biggl(1+\frac{2}{n}\biggr)^{1/4} & \text{if $n$ is even,}\\
\displaystyle\biggl(1+\frac{2}{n-(n+2)^{-1}}\biggr)^{1/4} & \text{if $n$ is odd.}
\end{cases}
\]
\end{theorem}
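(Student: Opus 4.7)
The plan is to rerun B\'ar\'any's proof, replacing the parallelogram-law identity by a one-sided variant adapted to the $4$-norm. We may assume $\abs{S}=n+2$, since discarding points only weakens the ratio $\max/\min$. Label the points $\vx_1,\dots,\vx_{n+2}$. Because they lie in $\bR^n$, the $n+1$ homogeneous linear equations $\sum_i \lambda_i = 0$ and $\sum_i \lambda_i \vx_i = \vo$ in $n+2$ unknowns $\lambda_1,\dots,\lambda_{n+2}$ admit a non-trivial real solution, and the $\lambda_i$ necessarily take both signs.

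The heart of the adaptation is the identity
\[ \sum_{i,j=1}^{n+2}\lambda_i\lambda_j \norm{\vx_i-\vx_j}_4^4 \ =\ 6\sum_{k=1}^n \biggl(\sum_{i=1}^{n+2}\lambda_i x_{i,k}^2\biggr)^2 \ \geq\ 0, \]
which I would prove by expanding each $(x_{i,k}-x_{j,k})^4$ coordinatewise with the binomial theorem and observing that, after summing over $(i,j)$, four of the five resulting terms factor through either $\sum_i\lambda_i$ or $\sum_i \lambda_i x_{i,k}$ and hence vanish. In the Euclidean setting the analogous sum is identically zero; here we lose equality but gain the non-negativity of the leftover square term---luckily in the direction the argument needs. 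Discovering and establishing this identity is the main conceptual obstacle, after which the rest is essentially verbatim B\'ar\'any.

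Partitioning the ordered pairs as $P = \set{(i,j) : \lambda_i\lambda_j > 0}$ and $N = \set{(i,j): \lambda_i\lambda_j < 0}$, and writing $M := \max_{i\neq j}\norm{\vx_i-\vx_j}_4^4$ and $m := \min_{i\neq j}\norm{\vx_i-\vx_j}_4^4$, the identity above gives
\[ M \sum_{(i,j)\in P} \abs{\lambda_i\lambda_j} \ \geq\ \sum_{(i,j)\in P}\abs{\lambda_i\lambda_j}\norm{\vx_i-\vx_j}_4^4 \ \geq\ \sum_{(i,j)\in N} \abs{\lambda_i\lambda_j}\norm{\vx_i-\vx_j}_4^4 \ \geq\ m\sum_{(i,j)\in N}\abs{\lambda_i\lambda_j}. \]
Setting $A := \sum_{\lambda_i>0}\lambda_i = -\sum_{\lambda_i<0}\lambda_i$, a standard expansion (the one B\'ar\'any uses) gives $\sum_{N}\abs{\lambda_i\lambda_j}=2A^2$ and $\sum_{P}\abs{\lambda_i\lambda_j}=2A^2-\sum_i\lambda_i^2$, whence $M/m\geq 2A^2/(2A^2-\sum_i\lambda_i^2)$.

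Finally, letting $k$ denote the number of positive $\lambda_i$, Cauchy-Schwarz gives $\sum_i\lambda_i^2 \geq A^2\bigl(1/k + 1/(n+2-k)\bigr)$. Minimising the right-hand side over integer $k \in \set{1,\dots,n+1}$ yields $\sum_i\lambda_i^2\geq 4A^2/(n+2)$ when $n$ is even (attained at $k=(n+2)/2$), and $\sum_i\lambda_i^2\geq 4(n+2)A^2/((n+1)(n+3))$ when $n$ is odd (attained at $k=(n+1)/2$). Substituting into the bound for $M/m$ and simplifying gives $M/m \geq 1+2/n$ in the even case and $M/m \geq 1+2/(n-(n+2)^{-1})$ in the odd case; taking fourth roots produces the two bounds in the theorem.
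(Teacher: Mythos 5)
Your proof is correct and follows essentially the same route as the paper's: your affine dependence $\sum_i\lambda_i=0$, $\sum_i\lambda_i\vx_i=\vo$ is just the Radon partition the paper uses (positive versus negative $\lambda_i$, normalized by $A$), and your key identity $\sum_{i,j}\lambda_i\lambda_j\norm{\vx_i-\vx_j}_4^4=6\sum_{k}\bigl(\sum_i\lambda_i x_{i,k}^2\bigr)^2\geq 0$ is exactly the surviving square term $6\sum_m\bigl(\sum_i\alpha_i a_{i,m}^2-\sum_j\beta_j b_{j,m}^2\bigr)^2$ that the paper obtains by combining \eqref{5}, \eqref{6} and \eqref{7}. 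The only difference is packaging: you isolate that identity up front and then split by the sign of $\lambda_i\lambda_j$, arriving at the same bound $M/m\geq 2A^2/\bigl(2A^2-\sum_i\lambda_i^2\bigr)$ as the paper's \eqref{8} and finishing with the identical Cauchy--Schwarz optimization.
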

\begin{corollary}\label{cor}
If $\abs{p-4} < \frac{4\log(1+2/n)}{\log(n+2)} = \frac{8(1+o(1))}{n\log n}$ then the largest cardinality of an equilateral set in $\ell_p^n$ is $e(\ell_p^n)=n+1$.
\end{corollary}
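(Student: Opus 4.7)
My plan is to imitate Smyth's derivation of Corollary~\ref{smythcor} from Sch\"utte's theorem, substituting Theorem~\ref{maintheorem} for Sch\"utte's. I argue by contradiction, supposing that $S \subseteq \bR^n$ is equilateral in $\ell_p^n$ with $\abs{S} \geq n+2$, common $p$-distance $\lambda > 0$, and $\abs{p-4} < 4\log(1+2/n)/\log(n+2)$.

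The key additional ingredient is a quantitative comparison of the $p$-norm and the $4$-norm on $\bR^n$. Monotonicity of $p$-norms together with H\"older's inequality gives, for every $\vect{z} \in \bR^n$,
\[ n^{-\abs{1/p - 1/4}}\norm{\vect{z}}_p \leq \norm{\vect{z}}_4 \leq n^{\abs{1/p - 1/4}}\norm{\vect{z}}_p. \]
Applied to each pairwise difference in $S$, each of which has $p$-norm exactly $\lambda$, this shows that the ratio of the maximum to minimum $4$-norm distance in $S$ is at most $n^{\abs{1/p - 1/4}}$. On the other hand Theorem~\ref{maintheorem} forces this ratio to be at least $(1+2/n)^{1/4}$, since $n - (n+2)^{-1} < n$ so the odd-$n$ bound is larger still. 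Taking logarithms in $n^{\abs{1/p - 1/4}} \geq (1+2/n)^{1/4}$ and rearranging yields
\[ \abs{p - 4}\log n \geq p\log(1 + 2/n). \]

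It remains to extract a contradiction with the hypothesis. When $p \leq 4$, the displayed inequality rearranges to $4/p \geq 1 + \log(1+2/n)/\log n = \log(n+2)/\log n$, which is equivalent to $4 - p \geq 4\log(1+2/n)/\log(n+2)$. When $p \geq 4$, the same inequality forces $p - 4 \geq 4\log(1+2/n)/(\log n - \log(1+2/n))$, whose right-hand side strictly exceeds $4\log(1+2/n)/\log(n+2)$ because $\log n - \log(1+2/n) < \log(n+2)$. Either way the hypothesis is contradicted, so $e(\ell_p^n) < n+2$ and hence $e(\ell_p^n) = n+1$. The asymptotic form $8(1+o(1))/(n\log n)$ then follows from $\log(1+2/n) = (2/n)(1+o(1))$ and $\log(n+2) = (\log n)(1+o(1))$. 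The argument is essentially algebraic once Theorem~\ref{maintheorem} is in hand; the only mildly delicate point is checking that the asymmetric lower bound $\abs{p-4} \geq p\log(1+2/n)/\log n$ cleanly implies the symmetric bound with denominator $\log(n+2)$ for both signs of $p-4$.
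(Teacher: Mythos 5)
Your proposal follows the paper's proof essentially verbatim: compare the $4$-norm and $p$-norm on the pairwise differences, feed the resulting distance-ratio bound into Theorem~\ref{maintheorem}, and solve for $\abs{p-4}$; your case analysis for $p\le 4$ versus $p\ge 4$ simply makes explicit the ``calculation'' the paper leaves to the reader, and it checks out. One caveat: the two-sided comparison you display, $n^{-\abs{1/p-1/4}}\norm{\vect{z}}_p \le \norm{\vect{z}}_4 \le n^{\abs{1/p-1/4}}\norm{\vect{z}}_p$, applied literally to the maximum and minimum separately only yields a ratio bound of $n^{2\abs{1/p-1/4}}$, which would cost a factor of $2$ in the final bound on $\abs{p-4}$. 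To get the ratio bound $n^{\abs{1/p-1/4}}$ that you (correctly) assert, you need the sharper one-sided form the paper states: for $1\le p\le 4$ monotonicity gives $\norm{\vect{z}}_4\le\norm{\vect{z}}_p$ with no factor of $n$, and the factor $n^{1/p-1/4}$ appears only in the reverse inequality (symmetrically for $p\ge 4$), so all $4$-norm distances lie in an interval of ratio $n^{\abs{1/p-1/4}}$. Since you already name monotonicity as an ingredient, this is a one-line repair rather than a genuine gap.
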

We do not know whether the lower bounds in Theorem~\ref{maintheorem} are sharp.
The following is the best upper bound that we can show.
\begin{proposition}\label{construction}
There exists a set $S$ of $n+2$ points in $\ell_4^n$ such that
\[ \frac{\displaystyle\max_{\vx,\vy\in S}\norm{\vx-\vy}_4}{\displaystyle\min_{\vx,\vy\in S, \vx\neq\vy}\norm{\vx-\vy}_4} = 1+\sqrt{\frac{2}{n}}+O(n^{-3/4}).\]
\end{proposition}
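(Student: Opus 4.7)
The plan is to realise an $\ell_4$-analogue of the double orthogonal simplex that witnesses sharpness of the Euclidean Sch\"utte theorem; assume $n = 2k$ is even, the odd case being analogous with factor sizes $\lfloor n/2\rfloor+1$ and $\lceil n/2\rceil+1$.

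\emph{Step~1: an equilateral simplex in $\ell_4^k$ with balanced $\ell_4$-norms.} Write $\vect{e}_1,\dots,\vect{e}_k$ for the standard basis of $\bR^k$, let $\vect{1} := \vect{e}_1+\dots+\vect{e}_k$, and let $d\in(0,1)$ be the smaller positive root of $(1-d)^4+(k-1)d^4 = 2$. Then $\{d\vect{1},\vect{e}_1,\dots,\vect{e}_k\}$ is equilateral in $\ell_4^k$ with common distance $2^{1/4}$, and one finds asymptotically $d = k^{-1/4}+O(k^{-1/2})$. Translate by $-s\vect{1}$, choosing $s\in(0,d)$ so that all translated vertices are equidistant from the origin in $\ell_4$-norm, that is,
\[ R^4 := k(d-s)^4 = (1-s)^4+(k-1)s^4. \]
Matching the $O(d)$-terms on both sides gives $s = d^2+O(d^3)$; substituting back yields $R^4 = 1-4d^2+O(d^3) = 1-4k^{-1/2}+O(k^{-3/4})$, and in particular $R < 1$ for $k$ large.

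\emph{Step~2: orthogonal assembly.} In $\bR^n = \bR^k\oplus\bR^k$ let $S$ be the union of the translated simplex placed in the first $k$ coordinates together with an identical copy placed in the last $k$ coordinates; then $\abs{S} = 2(k+1) = n+2$. All within-simplex distances equal $2^{1/4}$; by the direct-sum identity $\norm{(\vect{u},-\vect{w})}_4^4 = \norm{\vect{u}}_4^4+\norm{\vect{w}}_4^4$, every cross-distance equals $(2R^4)^{1/4} = R\cdot 2^{1/4}$. Since $R < 1$, the maximum-to-minimum distance ratio in $S$ is
\[ \frac{1}{R} = 1+k^{-1/2}+O(k^{-3/4}) = 1+\sqrt{2/n}+O(n^{-3/4}). \]

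The main obstacle is the sub-leading asymptotic in Step~1: the naive choice $s=0$ would give $R^4 = kd^4 \approx 1+4d$, yielding only the weaker bound $1+O(n^{-1/4})$. Recovering the sharper $\sqrt{2/n}$-rate requires the specific $s$ that cancels the linear-in-$d$ discrepancy between $k(d-s)^4$ and $(1-s)^4+(k-1)s^4$, after which the leading correction to $R^4$ is of order $d^2 \sim k^{-1/2}$. Carrying out this polynomial cancellation to the stated order is delicate but routine; the odd case is handled by the analogous construction, noting that $\tfrac12(\lfloor n/2\rfloor^{-1/2}+\lceil n/2\rceil^{-1/2}) = \sqrt{2/n}+O(n^{-3/2})$.
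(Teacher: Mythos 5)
Your construction is the paper's own in different coordinates: your points $\vect{e}_i-s\vect{1}$ and $(d-s)\vect{1}$ are exactly the paper's $(1+x,x,\dots,x)$ and $(y,\dots,y)$ under $x=-s$, $y=d-s$, and the orthogonal doubling and resulting ratio $1/R=1+\sqrt{2/n}+O(n^{-3/4})$ coincide with the paper's argument. The asymptotics $d=k^{-1/4}+O(k^{-1/2})$, $s=d^2+O(d^3)$ that you assert (and defer as ``routine'') are precisely what the paper's Lemma~\ref{prop} establishes by bootstrapping, so the proposal is correct and essentially identical in approach.
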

Unfortunately, this bound is far from the lower bound of $1+\frac{1}{2n}+O(n^{-2})$ given by Theorem~\ref{maintheorem}.
\section{Proofs}

\begin{proof}[Proof of Theorem~\ref{maintheorem}]
Consider any $\vx_1,\dots,\vx_{n+2}\in\bR^n$, and let
\[ \mu=\min_{i\neq j}\norm{\vx_i-\vx_j}_4\]
and
\[ M=\max_{i,j}\norm{\vx_i-\vx_j}_4\text{.}\]
By Radon's theorem \cite{Barvinok} there is a partition $A\cup B$ of $\{\vx_1,\dots,\vx_{n+2}\}$ such that the convex hulls of $A$ and $B$ intersect.
Without loss of generality we may translate the points so that $\vo$ lies in both convex hulls.
Write $A=\{\va_1,\dots,\va_K\}$ and $B=\{\vb_1,\dots,\vb_L\}$ where $K+L=n+2$ and $K,L\geq 1$.
Then there exist $\alpha_1,\dots,\alpha_K,\beta_1,\dots,\beta_L\geq 0$ such that
\begin{equation}\label{1}
\left.
\begin{aligned}
\sum_{i=1}^K\alpha_i&=1\text{,} & \sum_{i=1}^K\alpha_i \va_i &= \vo\text{,}\\
\sum_{j=1}^L\beta_j&=1\text{,} & \sum_{j=1}^L\beta_j \vb_j &= \vo\text{.}
\end{aligned}
\right\}
\end{equation}
Also, for all $i\in[K]$ and $j\in[L]$,
\begin{align}
\norm{\va_i-\va_j}_4^4&\leq M^4 \quad\text{whenever $i\neq j$,}\label{2}\\
\norm{\vb_i-\vb_j}_4^4&\leq M^4 \quad\text{whenever $i\neq j$,}\label{3}\\
\text{and}\quad\norm{\va_i-\vb_j}_4^4&\geq \mu^4\text{.}\label{4}
\end{align}
Apply the operation $\displaystyle\sum_{i=1}^K\alpha_i\sum_{\substack{j=1\\ j\neq i}}^K\alpha_j$ to both sides of the inequality \eqref{2}:
\begin{align*}
& \phantom{{}={}} \biggl(1-\sum_{i=1}^K\alpha_i^2\biggr)M^4 = \sum_{i=1}^K\alpha_i(1-\alpha_i)M^4 = \sum_{i=1}^K\alpha_i\sum_{\substack{j=1\\ j\neq i}}^K\alpha_j M^4\\
&\geq \sum_{i=1}^K\alpha_i\sum_{j=1}^K\alpha_j \sum_{m=1}^n(a_{i,m}-a_{j,m})^4\\
&= \sum_{m=1}^n\sum_{i=1}^K\sum_{j=1}^K\alpha_i\alpha_j (a_{i,m}^4-4a_{i,m}^3a_{j,m}+6a_{i,m}^2a_{j,m}^2-4a_{i,m}a_{j,m}^3+a_{j,m}^4)\\
&=\sum_{m=1}^n\sum_{i=1}^K\alpha_i a_{i,m}^4 - 4 \sum_{m=1}^n\biggl(\sum_{i=1}^K\alpha_i a_{i,m}^3\biggr)\biggl(\sum_{j=1}^K\alpha_j a_{j,m}\biggr)\\
& \phantom{{}={}} \quad + 6 \sum_{m=1}^n\biggl(\sum_{i=1}^K\alpha_i a_{i,m}^2\biggr)\biggl(\sum_{j=1}^K\alpha_j a_{j,m}^2\biggr) - 4 \sum_{m=1}^n\biggl(\sum_{i=1}^K\alpha_i a_{i,m}\biggr)\biggl(\sum_{j=1}^K\alpha_j a_{j,m}^3\biggr)\\
& \phantom{{}={}} \quad + \sum_{m=1}^n\sum_{j=1}^K\alpha_j a_{j,m}^4\text{,}
\end{align*}
which by \eqref{1} simplifies to
\begin{equation}\label{5}
\biggl(1-\sum_{i=1}^K\alpha_i^2\biggr)M^4 \geq 2\sum_{m=1}^n\sum_{i=1}^K\alpha_i a_{i,m}^4 + 6\sum_{m=1}^n\biggl(\sum_{i=1}^K\alpha_i a_{i,m}^2\biggr)^2\text{.}
\end{equation}
Similarly, if we apply $\displaystyle\sum_{j=1}^L\beta_j\sum_{\substack{i=1\\ i\neq j}}^L\beta_i$ to \eqref{3}, we obtain
\begin{equation}\label{6}
\biggl(1-\sum_{j=1}^L\beta_j^2\biggr)M^4 \geq 2\sum_{m=1}^n\sum_{j=1}^L\beta_j b_{j,m}^4 + 6\sum_{m=1}^n\biggl(\sum_{j=1}^L\beta_j b_{j,m}^2\biggr)^2\text{.}
\end{equation}
Next apply $\sum_{i=1}^K\alpha_i\sum_{j=1}^L\beta_j$ to \eqref{4}:
\begin{align*}
\mu^4 &=\sum_{i=1}^K\alpha_i\sum_{j=1}^L\beta_j \mu^4 \leq \sum_{i=1}^K\alpha_i\sum_{j=1}^L\beta_j\sum_{m=1}^n(a_{i,m}-b_{j,m})^4\\
&= \sum_{m=1}^n\sum_{i=1}^K\sum_{j=1}^L\alpha_i\beta_j(a_{i,m}^4-4a_{i,m}^3b_{j,m}+6a_{i,m}^2b_{j,m}^2-4a_{i,m}b_{j,m}^3+b_{j,m}^4)\\
&= \sum_{m=1}^n\biggl(\sum_{i=1}^K\alpha_i a_{i,m}^4\biggr)\biggl(\sum_{j=1}^L\beta_j\biggl) - 4\sum_{m=1}^n\biggl(\sum_{i=1}^K\alpha_i a_{i,m}^3\biggr)\biggl(\sum_{j=1}^L\beta_jb_{j,m}\biggr)\\
&\phantom{{}={}}\quad + 6\sum_{m=1}^n\biggl(\sum_{i=1}^K\alpha_i a_{i,m}^2\biggr)\biggl(\sum_{j=1}^L\beta_jb_{j,m}^2\biggr) - 4\sum_{m=1}^n \biggl(\sum_{i=1}^K\alpha_i a_{i,m}\biggr)\biggl(\sum_{j=1}^L\beta_jb_{j,m}^3\biggr)\\
&\phantom{{}={}}\quad + \sum_{m=1}^n\biggl(\sum_{i=1}^K\alpha_i\biggr)\biggl(\sum_{j=1}^L\beta_jb_{j,m}^4\biggr)\\
&\stackrel{\text{\eqref{1}}}{=} \sum_{m=1}^n\sum_{i=1}^K\alpha_i a_{i,m}^4 + 6\sum_{m=1}^n\biggl(\sum_{i=1}^K\alpha_i a_{i,m}^2\biggr)\biggl(\sum_{j=1}^L\beta_j b_{j,m}^2\biggr) + \sum_{m=1}^n\sum_{j=1}^L\beta_j b_{j,m}^4\text{,}
\end{align*}
that is,
\begin{equation}\label{7}
\sum_{m=1}^n\sum_{i=1}^K\alpha_i a_{i,m}^4 + \sum_{m=1}^n\sum_{j=1}^L\beta_j b_{j,m}^4 \geq \mu^4 - 6\sum_{m=1}^n\biggl(\sum_{i=1}^K\alpha_i a_{i,m}^2\biggr)\biggl(\sum_{j=1}^L\beta_j b_{j,m}^2\biggr)\text{.}
\end{equation}
Add \eqref{5} and \eqref{6} together:
\begin{align*}
&\phantom{{}\geq{}} \biggl(2-\sum_{i=1}^K\alpha_i^2-\sum_{j=1}^L\beta_j^2\biggr)M^4\\
&\geq 2\sum_{m=1}^n\sum_{i=1}^K\alpha_i a_{i,m}^4 + 2\sum_{m=1}^n\sum_{j=1}^L\beta_j b_{j,m}^4 +6\sum_{m=1}^n\biggl(\sum_{i=1}^K\alpha_i a_{i,m}^2\biggr)^2 + 6\sum_{m=1}^n\biggl(\sum_{j=1}^L\beta_j b_{j,m}^2\biggr)^2\\
&\stackrel{\text{\eqref{7}}}{\geq} 2\mu^4 - 12\sum_{m=1}^n\biggl(\sum_{i=1}^K\alpha_i a_{i,m}^2\biggr)\biggl(\sum_{j=1}^L\beta_j b_{j,m}^2\biggr)\\
&\quad + 6\sum_{m=1}^n\biggl(\sum_{i=1}^K\alpha_i a_{i,m}^2\biggr)^2 + 6\sum_{m=1}^n\biggl(\sum_{j=1}^L\beta_j b_{j,m}^2\biggr)^2\\
&= 2\mu^4 + 6\sum_{m=1}^n\Biggl(\biggl(\sum_{i=1}^K\alpha_i a_{i,m}^2\biggr)^2 - 2\biggl(\sum_{i=1}^K\alpha_i a_{i,m}^2\biggr)\biggl(\sum_{j=1}^L\beta_j b_{j,m}^2\biggr) + \biggl(\sum_{j=1}^L\beta_j b_{j,m}^2\biggr)^2\Biggr)\\
&= 2\mu^4 + 6\sum_{m=1}^n\biggl(\sum_{i=1}^K\alpha_i a_{i,m}^2-\sum_{j=1}^L\beta_j b_{j,m}^2\biggr)^2\\
&\geq 2\mu^4\text{.}
\end{align*}
Therefore,
\begin{equation}\label{8}
\frac{M^4}{\mu^4} \geq \frac{2}{2-\sum_{i=1}^K\alpha_i^2-\sum_{j=1}^L\beta_j^2}\text{.}
\end{equation}
By the Cauchy-Schwarz inequality and \eqref{1}, $\sum_{i=1}^K\alpha_i^2\geq 1/K$ and $\sum_{j=1}^L\beta_j^2\geq 1/L$.
Therefore,
\[ \sum_{i=1}^K\alpha_i^2 + \sum_{j=1}^L\beta_j^2 \geq \frac{1}{K} + \frac{1}{L} \geq
\begin{cases}
\frac{2}{n+2}+\frac{2}{n+2} & \text{if $n$ is even,}\\
\frac{2}{n+1}+\frac{2}{n+3} & \text{if $n$ is odd.}
\end{cases}
\]
Substitute this estimate into \eqref{8} to obtain
\[ \frac{M^4}{\mu^4} \geq
\begin{cases}
\displaystyle 1+\frac{2}{n} & \text{if $n$ is even,}\\
\displaystyle 1+\frac{2}{n-(n+2)^{-1}} & \text{if $n$ is odd,}
\end{cases}\]
which finishes the proof.
\end{proof}

\begin{proof}[Proof of Corollary~\ref{cor}]
It is well known and easy to see that for any $\vx\in\bR^n$, if $1\leq p\leq 4$ then $\norm{\vx}_4\leq\norm{\vx}_p\leq n^{1/p-1/4}\norm{\vx}_4$ and if $4\leq p < \infty$ then $\norm{\vx}_p\leq\norm{\vx}_4\leq n^{1/4-1/p}\norm{\vx}_p$.
Suppose that there exists an equilateral set $S$ of $n+2$ points in $\ell_p^n$.
Then 
\[ \frac{\displaystyle\max_{\vx,\vy\in S}\norm{\vx-\vy}_4}{\displaystyle\min_{\vx,\vy\in S, \vx\neq\vy}\norm{\vx-\vy}_4} \leq n^{\abs{1/4-1/p}}.\]
Combine this inequality with Theorem~\ref{maintheorem} to obtain
$1+\frac{2}{n} \leq n^{\abs{1-4/p}}$.
A calculation then shows that
\[\abs{p-4} \geq \frac{4\log(1+2/n)}{\log (n+2)} = \frac{8}{n\log n}\bigl(1+O(n^{-1})\bigr).\qedhere\] 
\end{proof}

\begin{proof}[Proof of Proposition~\ref{construction}]
Let $k\in\bN$, $x,y\in\bR$, and \[\va := (1+x,x,x,\dots,x)\in\ell_4^k\quad\text{and}\quad\vb:=(y,y,\dots,y)\in\ell_4^k.\]
We would like to choose $x$ and $y$ such that $\norm{\va}_4=\norm{\vb}_4$ and $\norm{\va-\vb}_4=2^{1/4}$.
This is equivalent to the following two simultaneous equations:
\begin{equation}\label{eq:sim}
\left.
\begin{aligned}
(1+x)^4+(k-1)x^4 &= k y^4\\
(1+x-y)^4+(k-1)(x-y)^4 &=2.
\end{aligned}
\right\}
\end{equation}
We postpone the proof of the following lemma.
\begin{lemma}\label{prop}
For each $k\in\bN$ the system \eqref{eq:sim} has a unique solution $(x_k,y_k)$ satisfying $y_k>0$.
Asymptotically as $k\to\infty$ we have
\[x_k=-k^{-1/2}+k^{-3/4}+O(k^{-1})\text{ and } y_k=k^{-1/4}-k^{-3/4}+O(k^{-1}).\]
\end{lemma}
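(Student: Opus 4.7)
My plan is to decouple the system via the substitution $w := y - x$. Since $(1 + x - y)^4 = (1-w)^4$ and $(x-y)^4 = w^4$, the second equation of \eqref{eq:sim} becomes
\[
(1-w)^4 + (k-1) w^4 = 2,
\]
involving $w$ alone, while the first becomes $(1+x)^4 + (k-1) x^4 = k(x+w)^4$. The left-hand side of the decoupled equation is strictly convex in $w$ (its second derivative equals $12[(1-w)^2 + (k-1) w^2]$), takes the value $1$ at $w = 0$, and tends to $+\infty$ as $\abs{w} \to \infty$; so it equals $2$ at exactly two points, one of each sign. Let $w_k$ denote the positive one.

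With $w = w_k$ fixed, the $x^4$ terms in the first equation cancel, leaving the cubic
\[
P(x) := 4(1 - k w) x^3 + 6(1 - k w^2) x^2 + 4(1 - k w^3) x + (1 - k w^4) = 0.
\]
The main obstacle is to prove $P$ has a unique real root. Expanding $(1-w)^4$ in the decoupled equation yields the identity $k w_k^4 = 1 + 4 w_k - 6 w_k^2 + 4 w_k^3$, and a direct substitution then produces the remarkable simplification
\[
1 - 4 k w_k + 6 k w_k^2 - 4 k w_k^3 + k^2 w_k^4 = 1 + k.
\]
The discriminant of $P'$ equals $-48$ times this expression and is therefore strictly negative. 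Combined with $k w_k > 1$ (which follows from the observation that $\phi(w) := 1 + 4 w - 6 w^2 + 3 w^3$ satisfies $\phi'(w) = 9(w - 2/3)^2 \geq 0$, so $\phi \geq \phi(0) = 1$ on $[0,\infty)$), this forces $P' < 0$ everywhere, so $P$ is strictly decreasing and has a unique real root $x_k$. A direct computation gives $P(-w_k) = (1 - w_k)^4 + k w_k^4 = 2 + w_k^4 > 0$, so the root satisfies $x_k > -w_k$, i.e., $y_k = x_k + w_k > 0$, as required.

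For the asymptotic expansions, I write $w_k = k^{-1/4}(1 + \eta_k)$ with $\eta_k \to 0$, substitute into $(1 - w_k)^4 + (k-1) w_k^4 = 2$, and match coefficients of $k^{-1/4}$, $k^{-1/2}$, $k^{-3/4}$ in succession to obtain $w_k = k^{-1/4} + k^{-1/2} - 2 k^{-3/4} + O(k^{-1})$. Substituting this expansion into $P$ and positing $x_k = b_2 k^{-1/2} + b_3 k^{-3/4} + O(k^{-1})$, matching the orders $k^{-1/4}$ and $k^{-1/2}$ in $P(x_k) = 0$ yields $b_2 = -1$ and $b_3 = 1$. Finally, $y_k = x_k + w_k = k^{-1/4} - k^{-3/4} + O(k^{-1})$. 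The delicate step is the uniqueness of $x_k$; the identity $1 - 4 k w + 6 k w^2 - 4 k w^3 + k^2 w^4 = 1 + k$ (valid along the curve defined by the decoupled equation) resolves it cleanly for every $k \in \bN$.
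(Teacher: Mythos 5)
Your reduction is genuinely different from the paper's and its main mechanism is correct. The paper works with $f(t)=k^{-1/4}\norm{(1,0,\dots,0)+t(1,1,\dots,1)}_4$ and derives everything from strict convexity of the $4$-norm: a strict Lipschitz bound makes $f(t)-t$ and $f(t)+t$ strictly monotone, which pins down $x$ once $x-y$ is known. You instead exploit that the degree-$4$ terms cancel: the second equation decouples into a convex quartic in $w=y-x$ with exactly two roots, and for fixed $w$ the first equation is a cubic $P$ in $x$. Your key identity checks out --- expanding $(1-w)^4+(k-1)w^4=2$ gives $kw^4=1+4w-6w^2+4w^3$, and substituting this into the discriminant of $P'$ really does yield $-48(1+k)<0$ --- and together with $kw_k>1$ (your $\phi$ argument is fine, since $kw^4-w^3=\phi(w)\geq 1$ for $w\geq 0$) this makes $P$ strictly monotone with a unique real root. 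This is an attractive, purely algebraic substitute for the paper's convexity argument. (Minor slip: $P(-w_k)=(1-w_k)^4+(k-1)w_k^4=2$, not $2+w_k^4$; only the sign matters, so nothing breaks.)

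However, as written you have not proved the uniqueness assertion of the lemma. You show that the branch $w=w_k>0$ contributes exactly one solution and that it has $y>0$; but the decoupled equation also has a negative root, and you never rule out that this branch contributes a second solution with $y>0$. The fix is two lines with your own machinery: for $w<0$ the leading coefficient $4(1-kw)$ of $P$ is positive, the same discriminant identity gives $P'>0$ everywhere, so $P$ is strictly increasing with a unique root, and $P(-w)=2>0$ now forces $x<-w$, i.e.\ $y=x+w<0$. This is exactly the paper's Case II and it must appear. A second, smaller issue is in the asymptotics: positing $x_k=b_2k^{-1/2}+b_3k^{-3/4}+O(k^{-1})$ and matching coefficients only determines the coefficients conditionally on such an expansion existing, so you need an a priori localization such as $x_k=O(k^{-1/2})$ first (the paper gets this by comparing $f(-k^{-1/2})+k^{-1/2}$ with $-\alpha_k$; in your setting, evaluating the monotone cubic $P$ at suitable points of order $k^{-1/2}$ does the job). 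Given that the paper itself only sketches that step, I would count the missing negative-$w$ case as the one genuine gap.
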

Using the solution $(x,y)=(x_k,y_k)$ from the lemma, we obtain
\[\norm{\va}_4=\norm{\vb}_4=k^{1/4}y=1-k^{-1/2}+O(k^{-3/4}).\]
Write $\va_1,\dots,\va_k$ for the $k$ permutations of $\va$ and set $\va_{k+1}=\vb$.
Then \eqref{eq:sim} gives that $\set{\va_1,\va_2,\dots,\va_{k+1}}$ is equilateral in $\ell_4^k$.
Finally, let $n=2k$.
Then in the set \[S=\setbuilder{(\va_i,\vo)}{i=1,2,\dots,k+1}\cup\setbuilder{(\vo,\va_i)}{i=1,2,\dots,k+1}\] of $n+2$ points in $\ell_4^n$ the only non-zero distances are $2^{1/4}$ and $2^{1/4}\norm{\va}_4$.
Therefore, 
\[\frac{\displaystyle\max_{\vx,\vy\in S}\norm{\vx-\vy}_4}{\displaystyle\min_{\vx,\vy\in S, \vx\neq\vy}\norm{\vx-\vy}_4} = \frac{1}{\norm{\va}_4} = 1 + \sqrt{\frac{2}{n}}+O(n^{-3/4}).\]
The case where $n=2k+1$ is odd is handled by using the points $\va_1,\dots,\va_{k+1}\in\ell_4^k$ as constructed above and the analogous construction of $k+2$ points $\va'_1,\dots,\va'_{k+2}\in\ell_4^{k+1}$ satisfying $\norm{\va'_i-\va'_j}_4=2^{1/4}$ and $\norm{\va'_i}_4=1-(k+1)^{-1/2}+O(k^{-1})$.
Then the non-zero distances between points in \[S=\setbuilder{(\va_i,\vo)}{i=1,2,\dots,k+1}\cup\setbuilder{(\vo,\va'_i)}{i=1,2,\dots,k+2}\]
are $2^{1/4}$ and $\left(\norm{a_i}_4^4+\norm{a'_j}_4^4\right)^{1/4}$, giving the same asymptotics as before.
\end{proof}
\begin{proof}[Proof sketch of Lemma~\ref{prop}]
For $t\in\bR$ let
\[ f(t) = \left(\frac{(1+t)^4+(k-1)t^4}{k}\right)^{1/4} = k^{-1/4}\norm{(1,0\,\dots,0) + t(1,1,\dots,1)}_4.\]
Then \eqref{eq:sim} is equivalent to
$f(x)=\abs{y}$ and $f(x-y)=(2/k)^{1/4}$.
Since $\norm{\cdot}_4$ is a strictly convex norm, $f$ is strictly convex.
Since also $f(0)=k^{-1/4}$ and $\lim_{t\to\pm\infty}f(t)=\infty$, it follows that there is a unique $\alpha_k<0$ and a unique $\beta_k>0$ such that $f(\alpha_k)=f(\beta_k)=(2/k)^{1/4}$.
Thus, $x-y\in\set{\alpha_k,\beta_k}$.
It also follows that $f$ is strictly decreasing on $(-\infty,\alpha_k)$.
It is immediate from the definition that $f$ is strictly increasing on $(0,\infty)$. 
Since $f(-k^{-1/4})<(2/k)^{1/4}<f(k^{-1/4})$, it follows that $\alpha_k<-k^{-1/4}$ and $\beta_k<k^{-1/4}$.

By strict convexity of $\norm{\cdot}_4$, $f$ also satisfies the strict Lipschitz condition \[\abs{f(t+h)-f(t)}<h\quad\text{for all $t,h\in\bR$ with $h>0$.}\]
It follows that $t\mapsto f(t)-t$ is strictly decreasing and $t\mapsto f(t)+t$ is strictly increasing.
Since $\lim_{t\to\infty}(f(t)-t)=1/k$ and $\lim_{t\to-\infty}(f(t)+t)=-1/k$, it follows that $f(t)>t+1/k$ and for each $r>1/k$ there is a unique $t$ such that $f(t)-t=r$; also $f(t)>-t-1/k$ and for each $r>-1/k$ there is a unique $t$ such that $f(t)+t=r$.

We now consider the two cases $x-y=\alpha_k$ and $x-y=\beta_k$.

\smallskip
\noindent\textbf{Case I.}
If $x-y=\alpha_k$, then $f(x)=\abs{y}=\abs{x-\alpha_k}$.
Since $f(x)>-x-1/k\geq -x-k^{-1/4}>-x+\alpha_k$, necessarily $y=x-\alpha_k>0$ and $f(x)-x=-\alpha_k$.
Since $-\alpha_k>k^{-1/4}\geq 1/k$, there is a unique $x_k$ such that $f(x_k)-x_k=-\alpha_k$, and since $f(0)-0=k^{-1/4}<-\alpha_k$, it satisfies $x_k<0$.
Setting $y_k=x_k-\alpha_k$, we obtain that \eqref{eq:sim} has exactly one solution $(x_k,y_k)$ such that $x_k-y_k=\alpha_k$, and it satisfies $x_k<0<y_k$.

\smallskip
\noindent\textbf{Case II.}
If $x-y=\beta_k$, then we similarly obtain a unique solution $(x,y)$, this time satisfying $x<0$ and $y<0$.

\smallskip
Therefore, \eqref{eq:sim} has exactly two solutions, one with $y>0$ and one with $y<0$.
Next we approximate the solution $(x_k,y_k)$ of Case I.

From $f(\alpha_k)=(2/k)^{1/4}$ it follows that
\begin{equation}\label{alpha}
(1+\alpha_k)^4+(k-1)\alpha_k^4=2,
\end{equation}
which shows first that $\alpha_k=O(k^{-1/4})$ as $k\to\infty$, and then, since $\alpha_k<0$, that $\alpha_k=-k^{-1/4}+O(k^{-1/2})$.
We may rewrite \eqref{alpha} as
\begin{align}
\alpha_k &= -k^{-1/4}(1-4\alpha_k-6\alpha_k^2-4\alpha_k^3)^{1/4}\notag\\
&= -k^{-1/4}\left(1-\alpha_k-3\alpha_k^2-9\alpha_k^3+O(k^{-1})\right),\label{alpha2}
\end{align}
where we have used the Taylor expansion $(1+x)^{1/4}=1+\frac{1}{4}x-\frac{3}{32}x^2+\frac{7}{128}x^3+O(x^4)$.
Substitute the estimate $\alpha_k=-k^{-1/4}+O(k^{-1/2})$ into the right-hand side of \eqref{alpha2} to obtain the improved estimate $\alpha_k=-k^{-1/4}-k^{-1/2}+O(k^{-3/4})$, and again, to obtain
\begin{equation*}\label{alpha3}
\alpha_k=-k^{-1/4}-k^{-1/2}+2k^{-3/4}+O(k^{-1}).
\end{equation*}
Since
\[ f(-k^{-1/2})+k^{-1/2} = k^{-1/4}+k^{-1/2}-k^{-3/4}+O(k^{-1}) > -\alpha_k\]
for sufficiently large $k$, and $f(x_k)-x_k=-\alpha_k$, it follows that $x_k>-k^{-1/2}$ for large $k$, that is, $x_k=O(k^{-1/2})$.
It follows that \[f(x_k)-x_k = k^{-1/4}\left(1 + x_k +O(k^{-1})\right)-x_k.\]
Set this equal to $-\alpha_k$ and solve for $x_k$ to obtain
$x_k=-k^{-1/2}+k^{-3/4}+O(k^{-1})$
and $y_k=x_k-\alpha_k=k^{-1/4}-k^{-3/4}+O(k^{-1})$.
\end{proof}
\section*{Acknowledgement}
We thank the referee for helpful remarks that led to an improved paper.

\end{document}